\newtheorem{thm}{Theorem}
\newtheorem{cor}[thm]{Corollary}
\newtheorem{lem}[thm]{Lemma}
\newtheorem{prop}[thm]{Proposition}
\newtheorem{rem}[thm]{Remark}
\numberwithin{thm}{section}
\numberwithin{equation}{section}
\newcommand{\Real}{\mathbb R}
\newcommand{\norm}[1]{\left\Vert#1\right\Vert}
\newcommand{\abs}[1]{\left\vert#1\right\vert}
\newcommand{\eps}{\varepsilon}
\newcommand{\la}{\langle}
\newcommand{\ra}{\rangle}
\newcommand{\A}{\mathcal{A}}
\newcommand{\E}{\mathbb{E}}
\newcommand{\F}{\mathcal{F}}
\newcommand{\Hi}{\mathcal{H}}
\newcommand{\I}{\mathcal{I}}
\newcommand{\M}{\mathcal{M}}
\newcommand{\N}{\mathcal{N}}
\newcommand{\Om}{\Omega}
\begin{document}
\title{Distance $k$-graphs of hypercube and $q$-Hermite polynomials}
\author{Hun Hee Lee}
\author{Nobuaki Obata}

\address{Hun Hee Lee :
Department of Mathematics, Chungbuk National University,
410 Sungbong-Ro, Heungduk-Gu, Cheongju 361-763, Korea}
\email{hhlee@chungbuk.ac.kr}

\address{Nobuaki Obata:
Graduate School of Information Sciences, Tohoku University,
Sendai, 980-8579, Japan}
\email{obata@math.is.tohoku.ac.jp}

\subjclass{Primary 46L53, 05C50}

\keywords{graph spectrum, hypercube, distance k-graph, $q$-Hermite polynomials.}

\begin{abstract}
We will prove that some weighted graphs on the distance $k$-graph of hypercubes approximate the $q$-Hermite polynomial of a $q$-gaussian variable by providing an appropriate matrix model.
\end{abstract}

\maketitle

\section{Introduction}
During the last decade quantum probabilistic approach to the study of asymptotic spectral analysis of graphs 
has been developed considerably, 
and many important probability distributions are obtained as scaled limits of spectral distributions
of growing graphs, see \cite{HO07} and references cited therein.
For example, the spectral distribution of the Hamming graph $H(d,n)$ converges to the standard normal distribution 
or the Poisson distribution by taking a suitable scaling limit as $d,n\to \infty$, which was
first proved by means of quantum probabilistic method in \cite{HOT}.

In this paper we extend quantum probabilistic method to weighted graphs
and derive the distribution of the $q$-gaussian variable $G_q$ $(-1\le q\le 1)$ by Bo\.zejko and Speicher \cite{BS91}. 
In fact, we will show that
the probability distribution of $H^q_k(G_q)$,
where $H^q_k$ is the $k$-th $q$-Hermite polynomial,
is derived from a sequence of weighted graphs on the distance $k$-graph of $n$-cube.
It is noteworthy that $q$-gaussian variables for $q\neq 0,1$ have not been so far observed 
in line with the asymptotic spectral analysis \cite{HO07},
where consideration has been mostly 
restricted to simple graphs, i.e.,  undirected graphs with no loops and no weights (multi-edges).
While, our result in a particular case of  $q=1$ 
is proved in \cite{Obata12} in an alternative manner.

This paper is organized as follows. In section \ref{sec-Preliminaries} we will recall some basic notations and tools we need for the proof of the result,
which includes the baby Fock model for $q$-gaussians. In section \ref{sec-hypercube} we will explain the connection between the distance $k$-graph of hypercubes and the baby Fock model, and finally we will prove the main result in section \ref{sec-main}.
For basic concepts of quantum (non-commutative) probability, see e.g., \cite{HO07, NS06}.

\section{Preliminaries}\label{sec-Preliminaries}

\subsection{Non-commutative probability and convergence of distributions}

Let $(\M_n, \varphi_n)_{n \ge 1}$ and $(\M, \varphi)$ be non-commutative (tracial) $W^*$-probability spaces. For $m\ge 1$ we consider self-adjoint elements
$X^n_1, \cdots, X^n_m \in \M_n$ and $X_1, \cdots, X_m\in \M$.
We say that the sequence of $n$-tuples $(X^n_1, \cdots, X^n_m)$ 
{\it converges to $(X_1, \cdots, X_m)$ in distribution} as $n\to \infty$ if
	$$ \varphi_n(P(X^n_1 ,\cdots, X^n_m)) \to \varphi(P(X_1 ,\cdots, X_m))$$
for any polynomial $P$ of $m$ non-commuting variables. We will use the notation
	$$(X^n_1, \cdots, X^n_m) \stackrel{\text{\rm dist}}{\longrightarrow} (X_1, \cdots, X_m).$$

\begin{rem}{\rm
In classical probability theory, we say that a sequence of random variables $X_n$ converges to $X$ in distribution if the probability distribution of $X_n$ converges weakly to that of $X$. Therefore the above notion of convergence in distribution for non-commuting random variables is not compatible to the one in classical probability theory. We say often that a sequence of random variables $X_n$ converges to $X$ in moments  if the $m$-th moment of $X_n$ converges to that of $X$ for all $m$. It is known that the convergence in moments implies the convergence in distribution if the distribution of $X$ allows a positive answer to the determinate moment problem. For more information along this line, see e.g., \cite[Chapter 4]{Chung}.
}
\end{rem}

We present a lemma, which is probably a folklore in non-commutative probability. We include the proof for the convenience of the readers. Recall that 
the non-commutative $L^p$-space $L^p(\M, \varphi)$ is the completion of $(\M, \norm{\cdot}_p)$, where the $p$-norm on $\M$, $\norm{\cdot}_p$ is defined by
	$$\norm{x}_p = \left(\varphi(\abs{x}^p)\right)^{\frac{1}{p}},\;\; x\in \M.$$ 
	
\begin{lem}\label{lem-conv-dist}
Let $X^n_1, \cdots, X^n_m \in \M_n$ and $X_1, \cdots, X_m\in \M$ be self-adjoint elements
and assume that 
\[
(X^n_1, \cdots, X^n_m) \stackrel{\text{\rm dist}}{\longrightarrow} (X_1, \cdots, X_m).
\]
Let $Y^n_1, \cdots, Y^n_m \in \M_n$ be self-adjoint elements satisfying
	\begin{enumerate}
		\item
		$\{\norm{X^n_i}_p, \norm{Y^n_i}_p: n\ge 1, 1\le i \le m\}$ are bounded for all $p\ge 1$ with bounds depending only on $p$.
		\item
		$\norm{X^n_i - Y^n_i}_p \to 0$ as $n\to \infty$ for all $p\ge 1$.
	\end{enumerate}
Then, we also have
	$$(Y^n_1, \cdots, Y^n_m) \stackrel{\text{\rm dist}}{\longrightarrow} (X_1, \cdots, X_m).$$
\end{lem}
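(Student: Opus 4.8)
The plan is to establish directly that
$$\varphi_n\bigl(P(Y^n_1,\dots,Y^n_m)\bigr) - \varphi_n\bigl(P(X^n_1,\dots,X^n_m)\bigr) \To 0 \qquad (n\to\infty)$$
for every polynomial $P$ in $m$ non-commuting variables; combined with the hypothesis $(X^n_1,\dots,X^n_m)\stackrel{\text{\rm dist}}{\To}(X_1,\dots,X_m)$ this yields the desired convergence for the $Y^n_i$. By linearity it suffices to treat a monomial $P(Z_1,\dots,Z_m)=Z_{i_1}Z_{i_2}\cdots Z_{i_k}$, so the task reduces to estimating $\varphi_n(Y^n_{i_1}\cdots Y^n_{i_k}) - \varphi_n(X^n_{i_1}\cdots X^n_{i_k})$ for fixed indices and arbitrary length $k$.

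First I would invoke the telescoping identity
$$X^n_{i_1}\cdots X^n_{i_k} - Y^n_{i_1}\cdots Y^n_{i_k} = \sum_{j=1}^{k} Y^n_{i_1}\cdots Y^n_{i_{j-1}}\,(X^n_{i_j}-Y^n_{i_j})\,X^n_{i_{j+1}}\cdots X^n_{i_k},$$
apply $\varphi_n$, and then use that $\varphi_n$ is a (tracial) state, so $\abs{\varphi_n(a)}\le \norm{a}_1$, together with the non-commutative H\"older inequality in $L^p(\M_n,\varphi_n)$ with all exponents equal to $k$. This bounds the $j$-th summand by
$$\Bigl(\prod_{l<j}\norm{Y^n_{i_l}}_k\Bigr)\,\norm{X^n_{i_j}-Y^n_{i_j}}_k\,\Bigl(\prod_{l>j}\norm{X^n_{i_l}}_k\Bigr).$$
By hypothesis (1) the two products of $\norm{\cdot}_k$-norms are bounded uniformly in $n$, and by hypothesis (2) the middle factor tends to $0$; hence each summand tends to $0$, and so does the whole difference. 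Note that the degree $k$ is unbounded as $P$ ranges over all polynomials, which is precisely why hypotheses (1) and (2) are assumed for every $p\ge1$ rather than for a single exponent; also, organizing the telescoping so that the "left" factors are $Y^n$'s and the "right" factors $X^n$'s keeps every factor inside the family covered by (1).

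I do not anticipate a genuine obstacle here: once the telescoping is in place, the estimate is a routine application of tracial H\"older in the spaces $L^p(\M_n,\varphi_n)$ — all of which contain $\M_n$, so finiteness of the relevant norms is automatic. The only points requiring a little care are recording the inequality $\abs{\varphi_n(a)}\le\norm{a}_1$ for a state, and the elementary verification of the telescoping identity by cancellation; both are standard, so the argument is essentially bookkeeping around these two facts.
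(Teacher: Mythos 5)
Your argument is essentially identical to the paper's: the paper also reduces to monomials and replaces the $X^n_i$ by $Y^n_i$ one at a time (your telescoping sum written out termwise), bounding each difference via the generalized H\"older inequality with all exponents equal to the degree of the monomial and then invoking hypotheses (1) and (2). The proposal is correct and only spells out more systematically what the paper illustrates on the example $\varphi_n((X^n_1)^2(X^n_2)^3)$.
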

\begin{proof}
It is enough to prove the case of monomials. Let us fix a monomial $P$. Our strategy is the following. We begin with $\lim_{n\to \infty}\varphi_n(P(X^n_1 ,\cdots, X^n_m))$ and exchange each $X^n_1$ with $Y^n_1$ one by one. Here we use the H\"older inequality and the conditions (1) and (2). Then we replace $X^n_2$ by $Y^n_2$ similarly, and we repeat the same procedure until we replace all $X^n_i$'s by $Y^n_i$'s.

Let us do the case of $\varphi((X^n_1)^2(X^n_2)^3)$ as an example.
Then, by the H\"older inequality we have
	$$\abs{\varphi((X^n_1 - Y^n_1)X^n_1(X^n_2)^3)} \le \norm{X^n_1 - Y^n_1}_5\norm{X^n_1}_5\norm{X^n_2}^3_5,$$
so that the conditions (1) and (2) allow us to conclude that
	$$\lim_{n\to \infty}\varphi((X^n_1)^2(X^n_2)^3) = \lim_{n\to \infty}\varphi(Y^n_1X^n_1(X^n_2)^3).$$
Note that the number 5 in $\norm{\cdot}_5$ is the degree of the monomial.

\end{proof}

\subsection{The $q$-gaussians}

In this subsection we recall the $q$-gaussian variables. Let $H_\Real$ be a real Hilbert space and $\Hi = H_\Real + i H_\Real$ be its complexification.
We consider the operator of symmetrization $P_n$ on $\Hi^{\otimes n}$ defined by
	$$P_0 \Om = \Om,\;\; P_n (f_1\otimes \cdots \otimes f_n) = \sum_{\pi \in S_n}q^{i(\pi)}f_{\pi(1)}\otimes \cdots \otimes f_{\pi(n)},$$
where $S_n$ denotes the symmetric group of permutations of $n$ elements and
    	$$i(\pi) = \# \{(i,j) | 1\leq i,j \leq n, \pi(i) > \pi(j)\}$$
is the number of inversions of $\pi \in S_n$.

Now we define the {\it $q$-inner product} $\left\langle \cdot, \cdot \right\rangle_q$ on $\Hi^{\otimes n}$ by
    $$\left\langle \xi, \eta \right\rangle_q =\left\langle
    \xi, P_n \eta \right\rangle \;\, \text{for}\;\, \xi, \eta\in
    \Hi^{\otimes n},$$
where $\left\langle \cdot, \cdot \right\rangle$ is the inner product in $\Hi^{\otimes n}$. When $-1<q<1$ $P_n$'s are strictly positive (\cite{BS91}), so that $\left\langle \cdot, \cdot \right\rangle_q$ is actually an inner product. We denote by $\Hi^{\otimes_q n}$ the resulting Hilbert space. Then one can associate the $q$-Fock space $\F_q(\Hi)$.
    $$\F_q(\Hi) = \mathbb{C}\Om \oplus \bigoplus_{n\geq 1}\Hi^{\otimes_q n},$$ where $\Om$ is a unit vector called vacuum. 
When $q=0$ we recover the so-called full Fock space over $\Hi$. 
In the extreme cases $q = \pm 1$, we have
\[
\F_1(\Hi)= \mathbb{C}\Om \oplus \bigoplus_{n\geq 1}\Hi^{\otimes_s n},
\qquad
\F_{-1}(\Hi) = \mathbb{C}\Om \oplus \bigoplus_{n\geq 1}\Hi^{\otimes_a n},
\]
which are referred to as the Bosonic and Fermionic Fock spaces, respectively.
Here $\otimes_s$ and $\otimes_a$ refer to symmetric and anti-symmetric tensor product of Hilbert spaces, respectively.

For $h\in H_\Real$, we can define a generalized $q$-semi-circular random variable on $\F_q(\Hi)$ ($-1<q<1$) by
	$$s(h)=\ell_q(h)+\ell^*_q(h),$$
where $\ell_q(h)$ is the left creation operator by $h\in \Hi$ and $\ell^*_q(h)$ is the adjoint of $\ell_q(h)$. Recall that $\ell_q(h)$ is given by
	$$\ell_q(h) \Om = h,\;\; \ell_q(h) f_1\otimes \cdots \otimes f_n = h\otimes f_1\otimes \cdots \otimes f_n.$$
Note that in the extreme cases $q=\pm 1$ the creation operators have 
slightly different forms due to the symmetrization procedure. For example, we have
	$$\ell_1(h) \Om = h,\;\; \ell_1(h) f^{\otimes n} = \frac{1}{n+1}\sum^n_{k=0} f^{\otimes n-k} \otimes h \otimes f^{\otimes k}.$$
Then, we get a $W^*$-probability space $(\Gamma_q, \varphi_q)$, where $\Gamma_q$ is the von Neumann algebra generated by $\{s(h): h\in H_\Real\}$ in $B(\F_q(\Hi))$ and $\varphi_q(\cdot) = \la \, \cdot\, \Om, \Om\ra$ is the vacuum state.

\subsection{Baby Fock space, central limit procedure and hypercontractivity}\label{BabyFock}

In this section we collect background materials focusing on baby Fock space and its central limit procedure due to Biane \cite{Bi97}.

Let $I = \{1, 2, \cdots, n\}$ be a fixed index set and $\eps : I\times I\rightarrow \{\pm 1\}$ be a ``choice of sign" function satisfying
	$$\eps(i,j) = \eps(j,i),\; \eps(i,i)=-1,\; \forall i,j\in I.$$
Now we consider the unital algebra $\A(I,\eps)$ with generators $(x_i)_{i\in I}$ satisfying
	$$x_ix_j - \eps(i,j)x_jx_i = 2\delta_{i,j},\;\; i,j\in I.$$
In particular, we have $x^2_i = 1$, $i\in I$, where $1$ refers to the unit of the algebra. 
The involution on $\A(I,\eps)$ is defined by $x^*_i=x_i$.
We will use the following notations for the elements in $\A(I,\eps)$.
	$$x_{\emptyset} := 1 \;\;\text{and}\;\; x_A := x_{i_1}\cdots x_{i_k},\;
	A = \{i_1< \cdots < i_k\} \subseteq I.$$
Then, $\{x_A : A\subseteq I\}$ is a basis for $A(\I,\eps)$. Let $\phi^\eps : \A(I,\eps) \rightarrow \mathbb{C}$ be the tracial state given by
\[
\phi^\eps(x_A) = \delta_{A,\emptyset},
\]
which gives rise to a natural inner product on $\A(I,\eps)$ as follows:
\[
\la x, y \ra := \phi^\eps(y^*x),\;\, x,y\in \A(I,\eps).
\]
Let
	$$H = L^2(\A(I,\eps),\phi^\eps)$$
be the corresponding $L^2$-space, then clearly $\{x_A : A\subseteq I\}$ is an orthonormal basis for $H$.

Now we consider left creations $\beta^*_i$ and left annihilations $\beta_i$ in $B(H)$, $i\in I$ in this context.
	$$\beta^*_i(x_A) = 
	\left\{\begin{array}{ll}x_ix_A & \text{if}\; i\notin A\\0 & \text{if}\; i\in A \end{array}\right.,
	\;\;\beta_i(x_A) = 
	\left\{\begin{array}{ll}x_ix_A & \text{if}\; i\in A\\0 & \text{if}\; i\notin A \end{array}\right.,\;\;
	i\in I,\; A\subseteq I.$$
The baby gaussians are defined as follows.
	$$\gamma_i := \beta^*_i + \beta_i,\;\; 1\le i\le n.$$
It is straightforward to check that $\gamma_i$ is the same as left multiplication operator with respect to $x_i$, so that we often identify $\gamma_i$ and $x_i$. Let $\Gamma_n$ be the von Neumann algebra generated by $\{\gamma_i\}^n_{i=1}$ in $B(H)$. It is also known that $1$ is a cyclic and separating vector for $\Gamma_n$, and the above is the faithful GNS representation of $(\Gamma_n, \tau_n^\eps)$, where $\tau_n^\eps$ is the vacuum state on $\Gamma_n$ given by
	$$\tau_n^\eps(\cdot) = \la\, \cdot\, 1, 1\ra.$$
Note that $\tau_n^\eps$ is tracial on $\Gamma_n$.

Now we replace $\{\eps(i, j) : 1\le i < j \le n\}$ with a family of i.i.d. random variables with
	$$P(\eps(i,j)= -1) = \frac{1-q}{2}, \;\; P(\eps(i,j)= 1) = \frac{1+q}{2}.$$
We set
	$$s_n = \frac{1}{\sqrt{n}}\sum^n_{i=1}\gamma_i.$$
Then, the Speicher's central limit procedure (\cite{Sp92} or \cite{Bi97}) tells us the following.
\begin{prop}\label{prop-q-CLT}
We have
\[
s_n \stackrel{\text{\rm dist}}{\longrightarrow} G_q
\]
for almost every $\eps$.
In other words, we have for any polynomial $Q$
\[
\lim_{n\rightarrow \infty}\tau^\eps_n(Q(s_n)) = \tau_q(Q(G_q))
\]
for almost every $\eps$.
\end{prop}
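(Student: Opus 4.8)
The plan is to reduce the statement to a moment computation and then invoke the strong law of large numbers to handle the ``almost every $\eps$'' clause. First I would observe that it suffices to compute $\lim_{n\to\infty} \tau^\eps_n(s_n^k)$ for each fixed $k\ge 1$, since the state $\tau_q$ on $\Gamma_q$ (restricted to the single variable $G_q$) is determined by the moments of the $q$-gaussian, and the $q$-gaussian distribution is compactly supported, hence determinate. Expanding,
\[
\tau^\eps_n(s_n^k) = n^{-k/2}\sum_{i_1,\dots,i_k=1}^{n}\tau^\eps_n(\gamma_{i_1}\cdots\gamma_{i_k}),
\]
and since $\gamma_i = x_i$ acts as left multiplication with $\tau^\eps_n(x_A) = \delta_{A,\emptyset}$, a product $\gamma_{i_1}\cdots\gamma_{i_k}$ has nonzero vacuum expectation only when the indices can be completely paired off: each index value appearing must appear an even number of times, and using the commutation relations $x_ix_j = \eps(i,j)x_jx_i$ one reduces such a word to a scalar which is $\pm 1$, the sign being a product of the relevant $\eps(i,j)$'s over the crossings of the pair partition.

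The key step is then the standard central-limit bookkeeping: contributions from index tuples with a repeated value (fewer than $k/2$ distinct indices) are $O(n^{-1})$ after the $n^{-k/2}$ normalization and vanish in the limit, so only \emph{pair partitions} of $\{1,\dots,k\}$ survive, each realized by roughly $n^{k/2}$ choices of distinct index values. For $k$ odd the limit is $0$. For $k$ even, $k=2p$, each pair partition $\mathcal{V}$ contributes, in the limit, the expectation over the randomness of $\prod_{\text{crossings of }\mathcal{V}} \eps(\cdot,\cdot)$; since the $\eps(i,j)$ are i.i.d.\ with mean $\E[\eps(i,j)] = \frac{1+q}{2} - \frac{1-q}{2} = q$, and distinct crossings of a pair partition involve distinct unordered index pairs (here one must check that a pair partition contributes only through \emph{nested/crossing} structure, so that the relevant $\eps$'s are independent), the expected sign is $q^{c(\mathcal{V})}$ where $c(\mathcal{V})$ is the number of crossings of $\mathcal{V}$. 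Summing, $\lim_n \E[\tau^\eps_n(s_n^{2p})] = \sum_{\mathcal{V}} q^{c(\mathcal{V})}$, which is exactly the $q$-analogue of Wick's formula giving the $2p$-th moment $\tau_q(G_q^{2p})$.

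Finally, to upgrade ``in expectation'' to ``for almost every $\eps$'' I would apply a Borel--Cantelli / strong law argument: for fixed $k$ compute (or bound) the variance $\mathrm{Var}\big(\tau^\eps_n(s_n^k)\big)$ and show it is summable in $n$ (or at least $O(n^{-1})$ along a subsequence with the gaps filled in by monotonicity-type estimates), so that $\tau^\eps_n(s_n^k) \to \tau_q(G_q^k)$ a.s.; taking the countable intersection over all $k$ gives a single full-measure set of $\eps$ on which all moments converge, hence $s_n \stackrel{\text{dist}}{\longrightarrow} G_q$. The main obstacle is the combinatorial heart of the middle step — verifying carefully that after normalization only pair partitions survive and that the sign picked up when straightening $\gamma_{i_1}\cdots\gamma_{i_k}$ to a scalar is precisely $\prod$ over crossings, together with the independence of the $\eps$'s indexed by those crossings; this is exactly Speicher's central limit theorem, so I would cite \cite{Sp92} or \cite{Bi97} for this computation rather than reproduce it.
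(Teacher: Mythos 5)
Your outline is correct and takes essentially the same route as the paper: the paper gives no proof of Proposition \ref{prop-q-CLT} at all, simply quoting Speicher's central limit procedure from \cite{Sp92} (see also \cite{Bi97}), and your sketch is exactly the standard moment/pair-partition argument of those references (only pair partitions survive the $n^{-k/2}$ normalization, the straightening sign is the product of $\eps(i,j)$'s over crossings, $\E[\eps(i,j)]=q$ gives $\sum_{\mathcal V}q^{c(\mathcal V)}$), ending with the same citation. One minor correction to your last step: the variance of $\tau^\eps_n(s_n^k)$ is in fact $O(n^{-2})$, since two index tuples contribute a nonzero covariance only if they share at least two index values (hence a common $\eps(i,j)$), so it is summable and Borel--Cantelli applies directly, with no need for the subsequence-plus-monotonicity device you hedge with.
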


We close this section with the last ingredient for the proof, namely hypercontractivity of baby Ornstein-Uhlenbeck semigroup. Recall that the number operator $N_\eps$ on $H$ is given by $N_\eps = \sum_{i\in I} \beta^*_i\beta_i$. Then, for any $A \subseteq I$ we have
$N_\eps x_A = \abs{A}x_A$. Since $1 \in H$ is separating and cyclic for 
$\Gamma_n$ we define the $\eps$-Ornstein-Uhlenbeck semigroup $P^\eps_t : \Gamma_n \rightarrow \Gamma_n$ by
	\begin{equation}\label{eps-OU-semigp}
	P^\eps_t(X)1 = e^{-tN_\eps}(X1),\;\; X\in \Gamma_n.
	\end{equation}

\begin{thm}\label{thm-hyper}
Let $1< p < r <\infty$ we have
			$$\norm{P^\eps_t}_{L^p \rightarrow L^r} \le 1\;\; \text{if and only if}\;\; e^{-2t} \le \frac{p-1}{r-1}.$$
\end{thm}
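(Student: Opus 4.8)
This estimate goes back to Biane \cite{Bi97}; I outline how its proof can be organised. The equivalence splits into two implications of very different character. Necessity of $e^{-2t}\le(p-1)/(r-1)$ is a soft reduction to the scalar two-point inequality. Sufficiency is the real content and will be obtained by induction on $n=\abs{I}$; its inductive step is a ``non-commutative two-point inequality with operator coefficients'' of exactly the kind at the heart of Carlen and Lieb's optimal hypercontractivity for Fermi fields. The point worth stressing is that the mixed signs $\eps(i,j)$ contribute nothing new to that step: already in the purely Fermionic case (namely $\eps\equiv-1$ off the diagonal) the last generator anticommutes only with the \emph{odd} part of the algebra generated by the others, so the inductive step must in any case accommodate a subalgebra split into an even and an odd part.

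For necessity, fix the generator $\gamma_1$. As $\gamma_1^2=1$ and $\tau^\eps_n(\gamma_1)=0$, the von Neumann algebra $W^*(\gamma_1)\subseteq\Gamma_n$ is isomorphic to $L^\infty(\{-1,1\},\mu)$ with $\mu$ the uniform measure, $\tau^\eps_n$ restricting to $\int\cdot\,d\mu$; hence $\norm{X}_p$ computed in $\Gamma_n$ is the $L^p(\mu)$-norm for every $X\in W^*(\gamma_1)$. Since $N_\eps$ fixes $1$ and acts as the identity on $\mathbb C\gamma_1$, the restriction of $P^\eps_t$ to $W^*(\gamma_1)$ is the scalar two-point Ornstein--Uhlenbeck semigroup with contraction factor $e^{-t}$ on the mean-zero part. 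Testing the assumed bound $\norm{P^\eps_t}_{L^p\to L^r}\le1$ against $1+c\gamma_1$ with $c\in\Real$, we recover precisely the scalar two-point hypercontractive inequality, which by the computation of Bonami and Gross forces $e^{-2t}\le(p-1)/(r-1)$.

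For sufficiency, induct on $n$, the case $n=1$ being that same scalar inequality. Recall first that $P^\eps_t$ is unital, positive and trace-preserving (indeed completely positive), hence automatically an $L^p\to L^p$ contraction; the content is the strict gain from $L^p$ to $L^r$. For the inductive step isolate $x_n=\gamma_n$: each $x_i$ either commutes or anticommutes with $x_n$, so $\N:=W^*(\gamma_1,\dots,\gamma_{n-1})$, which is a copy of $\Gamma_{n-1}$ with $\eps$ restricted, carries the $\z_2$-grading $\N=\N_0\oplus\N_1$ given by the sign of commutation with $x_n$, and every $X\in\Gamma_n$ has a unique expansion $X=a+bx_n$ with $a,b\in\N$. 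Using $N_\eps x_A=\abs{A}x_A$ and $x_Ax_n=x_{A\cup\{n\}}$ for $A\subseteq\{1,\dots,n-1\}$, a computation on the cyclic vector $1$ shows that $P^\eps_t$ acts diagonally,
\[
P^\eps_t(a+bx_n)=Q_t(a)+e^{-t}\,Q_t(b)\,x_n,
\]
where $Q_t$ is the Ornstein--Uhlenbeck semigroup of $\N$, for which $\norm{Q_t}_{L^p(\N)\to L^r(\N)}\le1$ holds by the inductive hypothesis as soon as $e^{-2t}\le(p-1)/(r-1)$. What is left, and this is the only genuine obstacle, is the abstract lemma: given a finite von Neumann algebra $\M$, a self-adjoint unitary $u\in\M$ with $\tau(u)=0$ that normalises a subalgebra $\N$ with $\M=\N\oplus\N u$ and $\tau(\N u)=0$, together with $\N=\N_0\oplus\N_1$ the $\pm1$-eigenspaces of $a\mapsto uau$, and a positive trace-preserving $S:\N\to\N$ with $\norm{S}_{L^p(\N)\to L^r(\N)}\le1$, the map $a+bu\mapsto S(a)+\lambda S(b)u$ is an $L^p(\M)\to L^r(\M)$ contraction whenever $\lambda^2\le(p-1)/(r-1)$. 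A naive tensorisation is not available because the sign twist makes $\Gamma_n$ the crossed product $\N\rtimes\z_2$ rather than $\N\otimes\ell^\infty_2$, so the two-point step in the $u$-direction has to be executed with $\N$-valued coefficients; as in Carlen--Lieb (and Ball--Carlen--Lieb) this is carried out via the sharp $2$-uniform convexity, i.e.\ optimal non-commutative Clarkson, inequalities for $L^p$ with $1<p\le2$ and their consequences by duality for $p\ge2$, after which the contractivity of $S$ is fed in. (Equivalently one could aim directly for the sharp modified logarithmic Sobolev inequality for $N_\eps$---whose Dirichlet form is additive over the modes $\beta^*_i\beta_i$---and conclude by Gross's integration lemma in the tracial setting, though verifying additivity of the log-Sobolev constant and the vanishing of the cross terms on a crossed product rather than a tensor product is itself the same type of difficulty.)
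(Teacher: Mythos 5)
The paper does not actually prove this theorem: it is quoted verbatim from Biane \cite[Theorem 5]{Bi97}, so there is no internal proof to compare against. Measured against the cited source, your sketch is a faithful outline of how the result is really proved there: the necessity argument (restricting to $W^*(\gamma_1)\cong L^\infty(\{-1,1\})$, on which $P^\eps_t$ is the two-point Ornstein--Uhlenbeck semigroup, and invoking the sharp Bonami--Gross two-point inequality) is correct, and so is your diagonal-action formula $P^\eps_t(a+bx_n)=Q_t(a)+e^{-t}Q_t(b)x_n$ --- note there is indeed no sign to track there, since $x_Ax_n=x_{A\cup\{n\}}$ for $A\subseteq\{1,\dots,n-1\}$ by the ordering convention, and $N_\eps$ just counts $|A|$.

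The caveat is that what you label ``the only genuine obstacle'' is in fact the entire analytic content of the theorem, and your proposal reduces it to the literature rather than proving it: the operator-coefficient two-point inequality in the graded (crossed-product) setting is precisely Biane's main technical result, obtained by extending the Ball--Carlen--Lieb optimal $2$-uniform convexity inequality to spin systems with mixed commutation signs, and that extension is not routine --- it is where most of \cite{Bi97} is spent. Two smaller points to handle if you were to write this out: (i) your ``abstract lemma'' as stated for arbitrary $1<p<r$ needs the standard reduction, since the convexity inequality is available for $1<p\le 2$ and its dual for $r\ge 2$, with the remaining exponent ranges recovered by duality of $P^\eps_t$ and the semigroup law $P^\eps_{t_1+t_2}=P^\eps_{t_1}P^\eps_{t_2}$; (ii) the inductive hypothesis must be applied to $Q_t$ on $\N\cong\Gamma_{n-1}$ for the \emph{same} exponent pair as the one produced by that reduction, so the bookkeeping of exponents in the induction is not quite as free as the sketch suggests. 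As a proof idea attributed to Biane and Carlen--Lieb it is accurate and well organized; as a self-contained proof it has delegated the key inequality.
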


The above result is due to Biane \cite[Theorem 5]{Bi97},
which leads us to a Khinchine type inequality as follows.

\begin{cor}\label{cor-Khinchine}
	Let $\displaystyle X = \sum^n_{i_1, \cdots, i_k = 1} \alpha_{i_1\cdots i_k} \gamma_{i_1}\cdots\gamma_{i_k}\in \Gamma_n$, then we have
		$$\norm{X}_2 \le \norm{X}_p \le (p-1)^{\frac{k}{2}}\norm{X}_2.$$
	\end{cor}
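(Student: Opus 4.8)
The plan is to read the lower bound off from the fact that $\tau_n^\eps$ is a tracial state, and to obtain the upper bound from the hypercontractivity estimate (Theorem~\ref{thm-hyper}) through the baby Ornstein--Uhlenbeck semigroup $P^\eps_t$.

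For the lower bound I would just invoke monotonicity of the non-commutative $L^s$-norms on a probability space: since $\tau_n^\eps$ is a faithful normal tracial state, $\norm{1}_s=1$ for all $s$, so the non-commutative Hölder inequality (equivalently, log-convexity of $s\mapsto\log\norm{x}_s$) gives $\norm{X}_2\le\norm{X}_p$ for every $2\le p<\infty$, with no use made of the special form of $X$.

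For the upper bound the starting observation is that $X1$ lies in the subspace $H_{\le k}:=\mathrm{span}\{x_A:\abs{A}\le k\}$ of $H$: since $\gamma_i$ is left multiplication by $x_i$, we have $\gamma_{i_1}\cdots\gamma_{i_k}1=x_{i_1}\cdots x_{i_k}$, and repeated use of $x_ix_j=\eps(i,j)x_jx_i+2\delta_{ij}$ (in particular $x_i^2=1$) collapses this product to $\pm x_A$ with $\abs{A}\le k$. Next I would fix $2<p<\infty$ and choose $t\ge 0$ with $e^{-2t}=(p-1)^{-1}$, so that $e^{tk}=(p-1)^{k/2}$ and $\norm{P^\eps_t}_{L^2\to L^p}\le 1$ by Theorem~\ref{thm-hyper} (the condition there being $e^{-2t}\le\frac{2-1}{p-1}$). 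Writing $X1=\sum_{j=0}^{k}\xi_j$ with $\xi_j\in H_j$, the vector $e^{-tk}e^{tN_\eps}(X1)=\sum_{j=0}^{k}e^{t(j-k)}\xi_j$ is again a finite combination of basis vectors $x_A=\gamma_A1$, so there is an element $Y\in\Gamma_n$ with $Y1$ equal to it (unique, since $1$ is separating for $\Gamma_n$); comparing with $P^\eps_t(Z)1=e^{-tN_\eps}(Z1)$ and $N_\eps x_A=\abs{A}\,x_A$ then gives $X=e^{tk}P^\eps_t(Y)$.

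The proof would close with the chain
\[
\norm{X}_p=e^{tk}\norm{P^\eps_t(Y)}_p\le e^{tk}\norm{Y}_2=(p-1)^{k/2}\Big(\sum_{j=0}^{k}e^{2t(j-k)}\norm{\xi_j}_2^2\Big)^{1/2}\le(p-1)^{k/2}\norm{X}_2,
\]
using $\norm{Z}_2=\norm{Z1}_H$, orthogonality of the $H_j$, and $e^{2t(j-k)}\le 1$ for $0\le j\le k$, $t\ge 0$; the cases $p\in\{2,\infty\}$ are trivial. The one point needing care — the main obstacle — is that $X1$ is not homogeneous of degree exactly $k$ (it only lies in $\bigoplus_{j\le k}H_j$), so one cannot directly write $P^\eps_t X=e^{-tk}X$; pre-composing with the $L^2$-contraction $e^{-tk}e^{tN_\eps}$ on $H_{\le k}$, which lifts every chaos level up to level $k$ without increasing the $L^2$-norm, is exactly what keeps the constant at the sharp value $(p-1)^{k/2}$ instead of producing an extra factor (such as $\sqrt{k+1}$) from a term-by-term estimate. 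Everything else is routine bookkeeping with the number operator.
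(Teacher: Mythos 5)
Your proof is correct, and its core is the same as the paper's: run the baby Ornstein--Uhlenbeck semigroup with $e^{-2t}=(p-1)^{-1}$ and apply Theorem \ref{thm-hyper} from $L^2$ to $L^p$, plus monotonicity of the $L^s$-norms under the tracial state for the lower bound. The difference is in how the chaos degree enters. The paper's proof is the one-liner ``$P^\eps_t(X)=e^{-kt}X$, then apply Theorem \ref{thm-hyper}'', i.e.\ it treats $X1$ as an eigenvector of $N_\eps$ with eigenvalue $k$. That identity is literally true only when $X1$ is homogeneous of degree exactly $k$ --- for instance when the sum runs over pairwise distinct indices, which is the only situation the corollary is used in later ($X_{n,k}$, $Y_{n,k}$ and their differences) --- but it fails for the corollary as stated, where repeated indices are allowed (e.g.\ $X=\gamma_1\gamma_1=1$ has $P^\eps_t(X)=X$, not $e^{-2t}X$). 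Your lifting device, replacing $X$ by the element $Y$ with $Y1=e^{-tk}e^{tN_\eps}(X1)$ so that $X=e^{tk}P^\eps_t(Y)$ and $\norm{Y}_2\le\norm{X}_2$ by orthogonality of the chaoses, is exactly what is needed to cover the inhomogeneous case with the same constant $(p-1)^{k/2}$, and the separating-vector argument you use to define $Y$ inside $\Gamma_n$ is sound. So your proof is not just a rederivation: it repairs (or, if one prefers, makes explicit the implicit distinct-index assumption behind) the paper's argument, at the cost of a little bookkeeping with the number operator that the paper omits.
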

\begin{proof}
Note that $P^\eps_t(X) = e^{-kt}X$. Then it is a direct application of Theorem \ref{thm-hyper}.
\end{proof}

\section{The distance $k$-graph of hypercubes and Baby Fock model}\label{sec-hypercube}

We begin this section by recalling some graph theoretic notions.
A {\it weighted graph} consists of the set of vertices $V$, the of edges $E \subset V\times V$ and a collections of weights $A = (a_{xy})_{(x,y)\in E}$, $a_{xy} \in \Real$. We say that $(V, E, A)$ is a weighted graph on the unweighted graph $(V, E)$ (i.e. every weight is identically 1). The matrix $A$ is called the {\it adjacency matrix} of the weighted graph. When the graph is finite (i.e. finite number of vertices) $A$ is a non-commutative random variable in a $W^*$-probability space $(M_{\abs{V}}, \varphi_e)$, where $M_{\abs{V}}$ is the matrix algebra of the size $\abs{V}$ and $\varphi_e(B) =\la B \delta_e, \delta_e \ra = B_{ee}$ for a fixed point $e \in V$ and $B\in M_{\abs{V}}$.

{\it The $n$-cube} is the unweighted graph $K^n_2 = K_2\times \cdots \times K_2$, the $n$-fold direct product of the complete graph $K_2$ with two vertices. Then there is a bijection between the vertices of $K^n_2$ and the set of pre-described orthonormal basis of $H$ as follows.
	$$(r_1,\cdots, r_n) \mapsto x^{r_1}_1\cdots x^{r_n}_n,$$
where $r_i \in \{0,1\}$ and $x^0_i = 1$. Note that $x^{r_1}_1\cdots x^{r_n}_n = x_A$ with $A=\{1\le i\le n : r_i = 1\}$.
 
In $n$-cube there is an edge between two vertices $(r_1,\cdots, r_n)$ and $(s_1,\cdots, s_n)$ if and only if there is only one index $i$ such that $r_i\ne s_i$. {\it The distance $k$-graph of $K^n_2$} ($1\le k\le n$) is the graph with the same set of vertices but with the set of edges described as follows.
	$$\{\left((r_1,\cdots,r_n), (s_1,\cdots, s_n)\right):
	\text{there are exactly $k$ indices $1\le i\le n$ with $r_i \ne s_i$}\}.$$

Now we focus on the the {\it Bosonic} case for the moment, which implies that $q=1$ and so that the choice of sign function $\eps$ is identically 1. Then, it is straightforward to see that the operator
	$$\sum^n_{i=1}\gamma_i$$
represents the adjacency matrix of $K^n_2$ since $\gamma_i$ is nothing but multiplying $x_i$. It is again straightforward to see that the operator
\[
\frac{1}{k!}\sum_{\substack{i_1,\cdots,i_k\\ \ne}}\gamma_{i_1}\cdots\gamma_{i_k} 
= \sum_{ i_1<i_2<\cdots<i_k }\gamma_{i_1}\gamma_{i_2}\cdots \gamma_{i_k}
\]
represents the adjacency matrix of the distance $k$-graph of $K^n_2$. 
Here the summation in the left-hand side is taken over distinct
$i_1,\cdots,i_k$ taken from $\{1,\cdots, n\}$.

Now let us turn our attention to the case of general $\eps$. This time $\gamma_i$ still acts on $x^{r_1}_1\cdots x^{r_n}_n$ by multiplying $x_i$ on the left, but we need to take the commutation relations into account. Thus, the operator
	$$\sum^n_{i=1}\gamma_i$$
corresponds to the adjacency matrix of the {\it weighted graph on $K^n_2$} described as follows: We put the weight
	$$\eps({\bf r}, i) = \eps(i,1)^{r_1}\cdots \eps(i,i-1)^{r_{i-1}}$$
on the edge between ${\bf r} = (r_1,\cdots, r_n)$ and ${\bf s} = (s_1,\cdots, s_n)$ with $s_i\ne r_i$, but $s_j = r_j$, $\forall j\ne i$. Similarly, the operator
	$$\frac{1}{k!}\sum_{\substack{i_1,\cdots,i_k\\ \ne}}\gamma_{i_1}\cdots\gamma_{i_k}$$
represents the adjacency matrix of a {\it weighted graph on the distance $k$-graph of $K^n_2$}. The rule for assigning weights on each edge is the following. If there is an edge between ${\bf r} = (r_1,\cdots, r_n)$ and ${\bf s} = (s_1,\cdots, s_n)$, then we have $k$ distinct indices $\{j_1<\cdots <j_k\}$ with $s_{j_l}\ne r_{j_l}$, $1\le l \le k$ but $s_j = r_j$, $\forall j\notin \{j_1, \cdots, j_k\}$. For a permutation $\sigma \in S_k$ and ${\bf j} = (j_1,\cdots, j_k)$ we define $\epsilon(\sigma, {\bf j})$ to be the number given by
	$$\gamma_{j_{\sigma(1)}}\cdots \gamma_{j_{\sigma(k)}} = \epsilon(\sigma, {\bf j})\gamma_{j_1}\cdots \gamma_{j_k}.$$
On the above edge we put the weight
	$$\frac{1}{k!} \sum_{\sigma\in S_k} \epsilon(\sigma, {\bf j})\cdot \prod^k_{l=1}\eps({\bf r}, j_l).$$

\section{Weighted graphs on Hypercubes and $q$-Hermite Polynomials}\label{sec-main}

In this section we focus on the analytic part of our results, namely the convergence analysis.

We recall the normalized $q$-Hermite polynomial $H^q_k$ given by the following recurrence relations.
	$$\begin{cases} H^q_0(x) = 1,\; H^q_1(x) = x\\
	xH^q_k(x) = H^q_{k+1}(x) + [k]_qH^q_{k-1}(x),\;\; k\ge 1,\end{cases}$$
where $[k]_q = \frac{1-q^k}{1-q}$.

We are interested in the following operator.
	$$X_{n,k} := \frac{1}{n^{\frac{k}{2}}}\sum_{\substack{i_1,\cdots,i_k\\ \ne}}\gamma_{i_1}\cdots\gamma_{i_k}.$$

We would like to find a recurrence relation regarding $X_{n,k}$ by multiplying $X_{n,1}$. Now we have
	\begin{align*}
	\lefteqn{\left( \sum_{\substack{i_1,\cdots,i_k\\ \ne}}\gamma_{i_1}\cdots\gamma_{i_k} \right) \left( \sum_i\gamma_i \right)}\\
	& = \sum_{\substack{i_1,\cdots,i_k, i\\ \ne}}\gamma_{i_1}\cdots\gamma_{i_k}\gamma_i
	+ \sum_{\substack{i=i_1,\cdots,i_k\\ \ne}}\gamma_{i_1}\cdots\gamma_{i_k}\gamma_i
	+ \cdots + \sum_{\substack{i_1,\cdots,i_k=i\\ \ne}}\gamma_{i_1}\cdots\gamma_{i_k}\gamma_i.
	\end{align*}
By relabeling we denote
$\displaystyle \sum_{\substack{i_1,\cdots,i_k, i\\ \ne}}\gamma_{i_1}\cdots\gamma_{i_k}\gamma_i = \sum_{\substack{i_1,\cdots,i_k, i_{k+1}\\ \ne}}\gamma_{i_1}\cdots\gamma_{i_{k+1}}$.
For the second term we have
	\begin{align*}
	\sum_{\substack{i=i_1,\cdots,i_k\\ \ne}}\gamma_{i_1}\cdots\gamma_{i_k}\gamma_i
	& = \sum_{\substack{i,i_2, \cdots,i_k\\ \ne}}\eps_{i,i_2}\cdots\eps_{i,i_k}\gamma_{i_2}\cdots\gamma_{i_k}\\
	& = \sum_{\substack{i,i_1, \cdots,i_{k-1}\\ \ne}}\eps_{i,i_1}\cdots\eps_{i,i_{k-1}}\gamma_{i_1}\cdots\gamma_{i_{k-1}}
	\end{align*}
by relabeling again. If we repeat the similar relabeling, then we get
	\begin{align}\label{eq-recur}
	\lefteqn{\left( \sum_{\substack{i_1,\cdots,i_k\\ \ne}}\gamma_{i_1}\cdots\gamma_{i_k} \right) \left( \sum_i\gamma_i \right)}\\
	& = \sum_{\substack{i_1,\cdots, i_{k+1}\\ \ne}}\gamma_{i_1}\cdots\gamma_{i_{k+1}}\nonumber\\
	& \;\;\;\; + \sum_{\substack{i_1,\cdots,i_{k-1}\\ \ne}}\sum_{i\notin (i_1, \cdots, i_{k-1})}(\eps_{i,i_1}\cdots\eps_{i,i_{k-1}} + \eps_{i,i_2}\cdots\eps_{i,i_{k-1}} + \cdots + 1)\gamma_{i_1}\cdots\gamma_{i_{k-1}}. \nonumber
	\end{align}

Now we define
	$$Y_{n,k} := \frac{1}{n^{\frac{k}{2}}}\sum_{\substack{i_1,\cdots,i_k\\ \ne}}[k+1]^{-1}_q Z(i_1,\cdots, i_k)\gamma_{i_1}\cdots\gamma_{i_k},$$
where
	$$Z(i_1,\cdots, i_k) := \frac{1}{n}\sum_{i\notin (i_1, \cdots, i_k)}(\eps_{i,i_1}\cdots\eps_{i,i_k} + \eps_{i,i_2}\cdots\eps_{i,i_k} + \cdots + 1).$$
Here comes the key recurrence relation obtained by dividing \eqref{eq-recur} with $n^{\frac{k+1}{2}}$.
	\begin{equation}\label{eq-recur2}
	X_{n,k+1} = X_{n,k}X_{n,1} - [k]_qY_{n,k-1},\;\; k\ge 1.
	\end{equation}
We will use Lemma \ref{lem-conv-dist} for the sequences $(X_{n,1}, \cdots, X_{n,k}, Y_{n,1}, \cdots, Y_{n,k-1})$ and  $(X_{n,1}, \cdots, X_{n,k}, X_{n,1}, \cdots, X_{n,k-1})$. In order to do so we need to check the conditions (1) and (2) in this case.

The condition (1) can be easily checked by Corollary \ref{cor-Khinchine}. Note that
	$$\{\gamma_{i_1}\cdots \gamma_{i_k} : \text{all distinct} \;\; i_1,\cdots,i_k \}$$
is an orthonormal family in $L^2(\Gamma_n)$ and the number of indices $(i_j)^k_{j=1}$ with all distinct entries is strictly smaller than $n^{\frac{k}{2}}$. 

The condition (2) is more involved. We first need to understand the limit behavior of $Z(i_1,\cdots,i_k)$ as $n\to \infty$.
Note that the sequence of random variables
	$$\{W_i = \eps_{i,i_1}\cdots\eps_{i,i_k} + \eps_{i,i_2}\cdots\eps_{i,i_k} + \cdots + 1:i\notin (i_1, \cdots, i_{k-1})\}$$
is an independent collection. Indeed for $i\ne i'$ both of them are not in $(i_1, \cdots, i_{k-1})$, we have $(i,i_j) \ne (i',i_{j'})$ and $(i,i_j) \ne (i_{j'}, i')$ for any $1\le j, j'\le k$. Moreover, each random variable has mean
	$$\E(W_i) = q^k + q^{k-1} + \cdots + 1  = [k+1]_q$$
and variance
	$$\sigma^2 = \E(W^2_i) - \E(W_i)^2 = k+1 + \sum^k_{j=1}2(k+1-j)q^k - [k+1]^2_q.$$
Thus, the classical central limit theorem tells us that
	$$\sqrt{n}([k+1]^{-1}_qZ(i_1,\cdots,i_k) - 1)\stackrel{\text{dist}}{\longrightarrow} \N(0,\sigma^2)$$
in our sense (i.e. convergence in moments, see \cite[section 30]{Billing95} or \cite{Bahr65}), so that their $L^p$-norms also converge. Thus, we have
	$$\norm{\sqrt{n}([k+1]^{-1}_qZ(i_1,\cdots,i_k) - 1)}_p \to \norm{\N(0,\sigma^2)}_p = \alpha_p,$$
and consequently
	\begin{equation}\label{eq-CLT-Lp-norm}
	\norm{[k+1]^{-1}_qZ(i_1,\cdots,i_k) - 1}_p \le \frac{\alpha_p+1}{\sqrt{n}}
	\end{equation}
for big enough $n$. Note that $\alpha_p$ is a constant depending only on $k$, $q$ and $p$, so that it is independent of $n$.

Now we get the estimate
	\begin{equation}\label{eq-Lp-norm-difference}
	\norm{X_{n,k} - Y_{n,k}}_p \le \frac{C_p}{\sqrt{n}}
	\end{equation}
for big enough $n$. Indeed, we have
	$$X_{n,k} - Y_{n,k} = \frac{1}{n^{\frac{k}{2}}}\sum_{\substack{i_1,\cdots,i_k\\ \ne}}(1-[k+1]^{-1}_qZ(i_1,\cdots,i_k))\gamma_{i_1}\cdots\gamma_{i_k}$$
and by Corollary \ref{cor-Khinchine}, Minkowski's inequality and \eqref{eq-CLT-Lp-norm} we have
	\begin{align*}
		\norm{X_{n,k} - Y_{n,k}}_p
		& = \left[\E\left(\norm{X^\eps_{n,k} - Y^\eps_{n,k}}_p\right)\right]^{\frac{1}{p}}\\
		& \le \frac{(p-1)^{\frac{k}{2}}}{n^{\frac{k}{2}}}\left[\E\left(\sum_{\substack{i_1,\cdots,i_k\\ \ne}}\abs{[k+1]^{-1}_qZ(i_1,\cdots,i_k) - 1}^2\right)^{\frac{p}{2}}\right]^{\frac{1}{p}}\\
		& \le \frac{(p-1)^{\frac{k}{2}}}{n^{\frac{k}{2}}}\left(\sum_{\substack{i_1,\cdots,i_k\\ \ne}}\norm{[k+1]^{-1}_qZ(i_1,\cdots,i_k) - 1}^2_p\right)^{\frac{1}{2}}\\
		& \le \frac{(p-1)^{\frac{k}{2}}(\alpha_p +1)}{\sqrt{n}},
	\end{align*}
where $X^\eps_{n,k}$ is the value of $X_{n,k}$ for a fixed choice of $\eps$. A standard application of Borel-Cantelli lemma with \eqref{eq-Lp-norm-difference} leads us to the following conclusion.
	$$\norm{X^\eps_{n,k} - Y^\eps_{n,k}}_p \to 0$$
for almost every $\eps$. This observation tells us, together with Proposition \ref{prop-q-CLT}, that we can choose a specific ``choice of sign" $\eps$ satsfying
	$$\begin{cases}X^\eps_{n,1} \stackrel{\text{dist}}{\longrightarrow} G_q\\
	\norm{X^\eps_{n,k} - Y^\eps_{n,k}}_p \to 0,\;\; \forall p\ge 1\end{cases}.$$
Note that we used the fact that the collection of all monomials in non-commuting variables is countable.
From now on we will fix this choice of $\eps$ and by abuse of notation we will denote $X^\eps_{n,k}$ and $Y^\eps_{n,k}$ simply by $X_{n,k}$ and $Y_{n,k}$, respectively. This explains how we get the condition (2).

Finally, we present the main convergence result.

	\begin{thm}\label{thm-main-conv}
	We have
		\begin{align*}
		\lefteqn{(X_{n,1}, \cdots, X_{n,k}, Y_{n,1}, \cdots, Y_{n,k-1})}\\
		& \stackrel{\text{dist}}{\longrightarrow} (H^q_1(G_q), \cdots, H^q_k(G_q), H^q_1(G_q), \cdots, H^q_{k-1}(G_q)).
		\end{align*}
	\end{thm}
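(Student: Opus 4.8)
The plan is to prove the convergence by induction on $k$, using the recurrence relation \eqref{eq-recur2} together with Lemma \ref{lem-conv-dist} to transfer the convergence of the $X_{n,j}$'s to the $Y_{n,j}$'s. First I would set up the induction: the base case $k=1$ is exactly Proposition \ref{prop-q-CLT} with the chosen $\eps$, since $X_{n,1} = s_n$ and $H^q_1(G_q) = G_q$. For the inductive step, suppose we already know that
\[
(X_{n,1}, \cdots, X_{n,k}) \stackrel{\text{\rm dist}}{\longrightarrow} (H^q_1(G_q), \cdots, H^q_k(G_q)).
\]
The conditions (1) and (2) of Lemma \ref{lem-conv-dist} for the pair of tuples $(X_{n,1}, \cdots, X_{n,k}, Y_{n,1}, \cdots, Y_{n,k-1})$ and $(X_{n,1}, \cdots, X_{n,k}, X_{n,1}, \cdots, X_{n,k-1})$ have been verified in the discussion preceding the theorem (condition (1) via Corollary \ref{cor-Khinchine} and the count of multi-indices, condition (2) via \eqref{eq-Lp-norm-difference} and Borel--Cantelli, for the fixed $\eps$). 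Hence Lemma \ref{lem-conv-dist} applies, provided we know that
\[
(X_{n,1}, \cdots, X_{n,k}, X_{n,1}, \cdots, X_{n,k-1}) \stackrel{\text{\rm dist}}{\longrightarrow} (H^q_1(G_q), \cdots, H^q_k(G_q), H^q_1(G_q), \cdots, H^q_{k-1}(G_q)),
\]
but since the extra coordinates are just repetitions of earlier ones, this follows formally from the inductive hypothesis: any polynomial in the larger tuple is a polynomial in the smaller tuple, so the moment convergence is the same. Thus Lemma \ref{lem-conv-dist} yields
\[
(X_{n,1}, \cdots, X_{n,k}, Y_{n,1}, \cdots, Y_{n,k-1}) \stackrel{\text{\rm dist}}{\longrightarrow} (H^q_1(G_q), \cdots, H^q_k(G_q), H^q_1(G_q), \cdots, H^q_{k-1}(G_q)).
\]

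It then remains to deduce the $(k+1)$-st coordinate, i.e.\ that $X_{n,k+1} \stackrel{\text{\rm dist}}{\longrightarrow} H^q_{k+1}(G_q)$, and to do so jointly with all the previous ones. Here I would use \eqref{eq-recur2} in the form $X_{n,k+1} = X_{n,k}X_{n,1} - [k]_q Y_{n,k-1}$. Since we already have joint distributional convergence of $(X_{n,1}, \cdots, X_{n,k}, Y_{n,k-1})$ to $(H^q_1(G_q), \cdots, H^q_k(G_q), H^q_{k-1}(G_q))$, and $X_{n,k+1}$ is a \emph{fixed} non-commutative polynomial in these variables (namely $X_{n,k}X_{n,1} - [k]_q Y_{n,k-1}$), any mixed moment involving $X_{n,k+1}$ and the earlier $X_{n,j}$'s is a mixed moment in $(X_{n,1}, \cdots, X_{n,k}, Y_{n,k-1})$, hence converges to the corresponding moment in $(H^q_1(G_q), \cdots, H^q_k(G_q), H^q_{k-1}(G_q))$. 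That limiting moment equals the moment obtained by substituting $X_{n,k+1} \mapsto H^q_k(G_q)H^q_1(G_q) - [k]_q H^q_{k-1}(G_q) = H^q_k(G_q)\cdot G_q - [k]_q H^q_{k-1}(G_q)$, which by the defining recurrence of the $q$-Hermite polynomials is exactly $H^q_{k+1}(G_q)$. This closes the induction and, along the way, also gives the joint statement with the $Y_{n,j}$ coordinates as in the theorem.

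The main subtlety — already handled in the text preceding the theorem, so I would only need to cite it — is the verification of condition (2) of Lemma \ref{lem-conv-dist}, specifically the estimate \eqref{eq-Lp-norm-difference} on $\norm{X_{n,k} - Y_{n,k}}_p$, which rests on the classical central limit behaviour of $Z(i_1,\cdots,i_k)$ and the hypercontractive Khinchine inequality of Corollary \ref{cor-Khinchine}, together with a Borel--Cantelli argument to fix a single good $\eps$ simultaneously for all $p$ and all $k\le$ the relevant range; one must also invoke Proposition \ref{prop-q-CLT} for the \emph{same} $\eps$, which is legitimate since there are only countably many constraints. Modulo that input, the proof itself is a clean induction driven by \eqref{eq-recur2}, and the only thing to be careful about is bookkeeping the joint (rather than marginal) convergence at each stage — which, as noted, is automatic because every new variable is a polynomial expression in variables whose joint convergence is already known. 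The one genuine point of content inside the present argument is recognizing that the limit of the recurrence \eqref{eq-recur2} is precisely the $q$-Hermite three-term recurrence $x H^q_k(x) = H^q_{k+1}(x) + [k]_q H^q_{k-1}(x)$; everything else is a matter of pushing moment convergence through fixed polynomials and applying the two cited lemmas.
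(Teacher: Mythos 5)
Your proposal is correct and follows essentially the same route as the paper: induction on $k$, using the recurrence \eqref{eq-recur2} to realize $X_{n,k+1}$ as a fixed polynomial in an already converging tuple (identified in the limit via the $q$-Hermite three-term recurrence), and Lemma \ref{lem-conv-dist} with the previously verified conditions (1)--(2) to swap the repeated $X$-coordinates for the $Y$-coordinates. The only cosmetic difference is that the paper treats $k=2$ separately via the identity $X_{n,2}=X_{n,1}^2-I=H^q_2(X_{n,1})$ (since $Y_{n,0}$ is not formally defined), whereas your uniform use of \eqref{eq-recur2} at $k=1$ tacitly needs the convention $Y_{n,0}=I$; this is immaterial.
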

\begin{proof}
We will use induction on $k$. When $k=1$ we get the result directly from Proposition \ref{prop-q-CLT}. When $k=2$ we note that
	$$X_{n,2} = \frac{1}{n}\sum_{i\ne j}\gamma_i\gamma_j = \left(\frac{1}{\sqrt{n}}\sum_i\gamma_i\right)^2 - I = X^2_{n,1} - I
	= H^q_2(X_{n,1}).$$
Thus, we clearly have
	$$(X_{n,1}, X_{n,2}, X_{n,1}) \stackrel{\text{dist}}{\longrightarrow} (H^q_1(G_q), H^q_2(G_q), H^q_1(G_q)).$$
If we compare two sequences $(X_{n,1}, X_{n,2}, X_{n,1})$ and $(X_{n,1}, X_{n,2}, Y_{n,1})$, then we get the result we wanted by Lemma \ref{lem-conv-dist} since we already checked the conditions (1) and (2) are satisfied. 

Now we suppose the the conclusion is true upto $k$ and let us check the case for $k+1$. Recall the recurrence relation
	$$X_{n,k+1} = X_{n,k}X_{n,1} - [k]_qY_{n,k-1},$$
so that we get
	\begin{align*}
	\lefteqn{(X_{n,1}, \cdots, X_{n,k}, X_{n,k+1}, Y_{n,1}, \cdots, Y_{n,k-1})}\\
	& \stackrel{\text{dist}}{\longrightarrow} (H^q_1(G_q), \cdots, H^q_k(G_q), H^q_{k+1}(G_q), H^q_1(G_q), \cdots, H^q_{k-1}(G_q)).
	\end{align*}
This can be trivially extended to the following convergence.
	\begin{align*}
	\lefteqn{(X_{n,1}, \cdots, X_{n,k}, X_{n,k+1}, Y_{n,1}, \cdots, Y_{n,k-1}, X_{n,k})}\\
	& \stackrel{\text{dist}}{\longrightarrow} (H^q_1(G_q), \cdots, H^q_k(G_q), H^q_{k+1}(G_q), H^q_1(G_q), \cdots, H^q_{k-1}(G_q), H^q_k(G_q)).
	\end{align*}
If we compare two sequences $(X_{n,1}, \cdots, X_{n,k}, X_{n,k+1}, Y_{n,1}, \cdots, Y_{n,k-1}, X_{n,k})$ and $(X_{n,1}, \cdots, X_{n,k}, X_{n,k+1}, Y_{n,1}, \cdots, Y_{n,k-1}, Y_{n,k})$, then we get the result we wanted by Lemma \ref{lem-conv-dist} since we already checked the conditions (1) and (2) are satisfied. This finishes the induction process.

\end{proof}

	\begin{cor}\label{cor-main}
	For any $k\ge 1$ we have
		$$X_{n,k} \stackrel{\text{dist}}{\longrightarrow}H^q_k(G_q).$$
	Thus, we can conclude that $H^q_k(G_q)$ can be approximated (in distribution) by a sequence of weighted graphs on the distance $k$-graph of $K^n_2$.
	\end{cor}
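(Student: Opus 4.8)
The plan is to deduce Corollary \ref{cor-main} directly from Theorem \ref{thm-main-conv} together with the graph-theoretic dictionary set up in Section \ref{sec-hypercube}; essentially no new analytic work is needed beyond what has already been done.

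First I would read off the single-variable statement from the joint convergence. For any polynomial $P$ in one non-commuting variable, $P(X_{n,k})$ is a particular instance of a polynomial in the $2k-1$ variables $(X_{n,1},\cdots,X_{n,k},Y_{n,1},\cdots,Y_{n,k-1})$, namely one involving only the $k$-th slot. Hence Theorem \ref{thm-main-conv} yields $\tau^\eps_n(P(X_{n,k}))\to\tau_q(P(H^q_k(G_q)))$ for all such $P$, which is exactly $X_{n,k}\stackrel{\text{dist}}{\longrightarrow}H^q_k(G_q)$. This step uses nothing from Lemma \ref{lem-conv-dist}: we are merely discarding coordinates of a joint distributional limit, not interchanging them, so there is no boundedness or positivity condition to verify.

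Second I would translate this into the claimed statement about weighted graphs. By the computation at the end of Section \ref{sec-hypercube}, for the fixed ``good'' choice of sign $\eps$ the operator $\tfrac{1}{k!}\sum_{\substack{i_1,\cdots,i_k\\ \ne}}\gamma_{i_1}\cdots\gamma_{i_k}$ is exactly the adjacency matrix $A^\eps_{n,k}$ of an explicit weighted graph on the distance $k$-graph of $K^n_2$ (with the weight rule recorded there), so that $X_{n,k}=\tfrac{k!}{n^{k/2}}\,A^\eps_{n,k}$, while the vacuum state $\tau^\eps_n(\cdot)=\la\,\cdot\,1,1\ra$ coincides with the state $\varphi_e$ attached to the base vertex $e$ corresponding to $x_\emptyset=1$. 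Therefore the rescaled adjacency matrices $\tfrac{k!}{n^{k/2}}A^\eps_{n,k}$, regarded in the state $\varphi_e$, converge in moments to $H^q_k(G_q)$, which is precisely the assertion that $H^q_k(G_q)$ is approximated in distribution by a sequence of weighted graphs on the distance $k$-graph of $K^n_2$; for $-1<q<1$ one may add that $G_q$, hence $H^q_k(G_q)$, is bounded, so moment convergence upgrades to weak convergence of the genuine spectral distributions (cf. the Remark in Section \ref{sec-Preliminaries}). I do not expect any real obstacle here, since all the substance lies in Theorem \ref{thm-main-conv}; the only points that deserve care are the (essentially trivial) observation that a marginal of a joint distributional limit is again a distributional limit, and the bookkeeping needed to record the precise normalization $X_{n,k}=\tfrac{k!}{n^{k/2}}A^\eps_{n,k}$ and the identification $\tau^\eps_n=\varphi_e$, so that the phrase ``weighted graphs on the distance $k$-graph of $K^n_2$'' is justified literally rather than only morally.
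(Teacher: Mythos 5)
Your proposal is correct and follows the paper's own (implicit) argument: the corollary is read off from Theorem \ref{thm-main-conv} by restricting to the $k$-th coordinate, and the graph interpretation is exactly the Section \ref{sec-hypercube} identification of $\frac{1}{k!}\sum_{\ne}\gamma_{i_1}\cdots\gamma_{i_k}$ with the weighted adjacency matrix and of the vacuum state with $\varphi_e$ at the vertex $x_\emptyset$. Your extra bookkeeping (the factor $k!/n^{k/2}$ and the remark on upgrading moment convergence for $-1<q<1$) is accurate but not needed beyond what the paper intends.
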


\begin{rem}{\rm
	\begin{enumerate}
		\item
		When $q=1$ we recover the result of N. Obata in \cite{Obata12}.
		\item
		The above result provides a matrix model for $H^q_k(G_q)$ which is homogeneous in degree.
	\end{enumerate}
}
\end{rem}

\bibliographystyle{amsplain}
\providecommand{\bysame}{\leavevmode\hbox
to3em{\hrulefill}\thinspace}

\end{document}